\newtheorem{theorem}{Theorem}[section]
\newtheorem{proposition}[theorem]{Proposition}
\newtheorem{lemma}[theorem]{Lemma}
\theoremstyle{definition}
\newtheorem{remark}[theorem]{Remark}
\newtheorem{assumption}{Assumption}[section]
\def\div{\mathop{\mathrm{div}}\nolimits}
\def\!{\mathop{\mathrm{!}}}
\def\R{\mathbb{ R}}
\newlength{\boxwidth}
\date \today
\title{Comparison and maximum principles for a class of flux-limited diffusions with external force fields}
\author{Manh Hong Duong \footnote{Mathematics Institute, University of Warwick, Coventry CV4 7Al, UK. Email: m.h.duong@warwick.ac.uk}}
\begin{document}
\maketitle
\begin{abstract}
In this paper, we are interested in a general equation that has finite speed of propagation compatible with Einstein's theory of special relativity. This equation without external force fields has been derived recently by means of optimal transportation theory. We first provide an argument to incorporate the external force fields. Then we are concerned with comparison and maximum principles for this equation. We consider both stationary and evolutionary problems. We show that the former satisfies a comparison principle and a strong maximum principle. While the latter fulfils weaker ones. The key technique is a transformation that matches well with the gradient flow structure of the equation.
\end{abstract}
\section{Introduction}
\subsection{Motivation}
This paper is concerned with comparison and maximum principles for the following equation
\begin{equation}
\label{eq: gRHE eqn}
\partial_t u=\div[u\nabla\varphi^*(\nabla V)]+\div[u\nabla \varphi^*(\nabla \log u)],
\end{equation}
In this equation, the spatial domain is $\R^d$; the unknown is $u:[0,T]\to \R^d$; $\div$ denotes the divergence operator; the function $V: \R^d\to \R$ is given, and finally $\varphi^*$ is the Legendre transformation of a given function $\varphi\in C^2(\R^d)$. A typical example of Eq. \eqref{eq: gRHE eqn} is when $\varphi$ is the so-called relativistic cost function
\begin{equation}
\label{eq: rel cost}
\varphi(x)\equiv\varphi_c(x)=\begin{cases}
\left(1-\sqrt{1-\frac{|x|^2}{c^2}}\right)c^2\quad \text{if}\quad |x|\leq c,\\
+\infty\quad \text{if}\quad |x|> c,
\end{cases}
\end{equation}
where $c$ is a given constant which has the meaning of the speed of light. In this case,
\begin{equation*}
\varphi_c^*(x)=c^2\left(\sqrt{1+\frac{|x|^2}{c^2}}-1\right),\quad\text{and}	\quad \nabla \varphi_c^*(x)=\frac{x}{\sqrt{1+\frac{|x|^2}{c^2}}},
\end{equation*}
and Eq. \eqref{eq: gRHE eqn} becomes the well-known relativistic heat equation (with external force fields)
\begin{equation}
\label{rel heat equation}
\partial_t u=\div\left(\frac{u\nabla V}{\sqrt{1+\frac{|\nabla V|^2}{c^2}}}\right)+\div\left(\frac{u\nabla u}{\sqrt{u^2+\frac{|\nabla u|^2}{c^2}}}\right).
\end{equation}
In general, in this paper, the function $\varphi$ will be of general form
\begin{equation}
\label{eq: general cost}
\varphi(x)=\begin{cases}
\tilde{c}(|x|),\quad \text{if}\quad |x|\leq c,\\
+\infty\quad \text{if}\quad |x|> c,
\end{cases}
\end{equation}
where $\tilde{c}$ is a given function on the real line.

Let us first review relevant literature about the origin of Eq. \eqref{eq: gRHE eqn}. It is well-known that the classical heat equation, $\partial_t u=\Delta u$, satisfies a strong maximum principle which implies infinite speed of propagation. However this is incompatible with Einstein's theory of special relativity that the speed of light is the highest admissible free velocity in a medium. Eq. \eqref{eq: gRHE eqn}, which belongs to a larger class of the so-called flux-limited diffusions, have recently been introduced to fulfil Einstein's theory. In particular, the relativistic heat equation (i.e., Eq. \eqref{rel heat equation} without external field $V=0$) was proposed first by Rosenau~\cite{Ros92} and was derived later by Brenier~\cite{Bre03} by means of optimal transportation theory. He formally showed that the relativistic heat equation is a gradient flow of the Boltzmann entropy with respect to a Kantorovich optimal transport functional associated to the relativistic cost \eqref{eq: rel cost}. In \cite{CP09}, the authors proved this rigorously and generalized to the general cost in \eqref{eq: general cost} obtaining Eq. \eqref{eq: gRHE eqn} with $V=0$. Eq. \eqref{rel heat equation} with a non-zero force field has also appeared in other contexts. For instance, to be able to describe the velocity profile of charge carriers along submicrometer electronic devices under high fields, in \cite{Zakari97}, the author obtained Eq. \eqref{rel heat equation} using the method of continued-fraction expansion. As another example, in the context of the flux-limited chemotaxis model, in \cite{BBNS10} the author derived a similar equation as \eqref{rel heat equation} by optimizing the the flux density of particles along the trajectory induced by the chemoattractant. Eq. \eqref{eq: gRHE eqn} with a non-zero external force field is a generalisation of Eq. \eqref{rel heat equation} to the general cost function \eqref{eq: general cost}.

%One important feature of Eq. \eqref{rel heat equation} is that the Gibbs measure is the unique equilibrium measure. Keeping this property and motivated by \cite{CP09}, we generalize Eq. \eqref{rel heat equation} to the general case as in \eqref{eq: gRHE eqn}. We will review the derivation of Eq. \eqref{eq: gRHE eqn} in Section \ref{sec: derivation}.

Eq. \eqref{eq: gRHE eqn}, and in general flux-limited diffusion equations, play an important role both in theoretical research and in practical applications. From a practical point of view, these models have been used widely, for instance, in plasma physics~\cite{ACM05a, ACM05b}, in the modelling of the charge transport in microelectronic devices~\cite{Han91, ZJ98}, and of multicellular growing systems~\cite{BBNS10, BBNS12}. Besides their practical usage, they also create interesting mathematical problems due to the nonlinearity and discontinuity of the function $\varphi$. To solve these problems, many authors have established intriguing connections between different fields of mathematics such as nonlinear partial differential equations, optimal transportation and differential geometry, see e.g.,\cite{Bre03, ACM06, CP09,BP13} and references therein.

\subsection{Aim of the paper}
The aim of this paper is to study comparison and maximum principles for Eq. \eqref{eq: gRHE eqn}. Comparison and maximum principles have been the subjects of intensive research, see the monographs~\cite{Evans98, GT01, PS07} and references therein for more information. These principles provide insightful information about partial differential equations such as propagation of speed and uniqueness.
As mentioned  in the previous section, Eq. \eqref{eq: gRHE eqn} has finite speed of propagation; hence it certainly does not satisfy a strong maximum principle. Therefore, it is natural to ask whether it does fulfil a weak maximum principle. This is the focus of the present paper. We consider both the stationary and evolutionary problems. We show that the former satisfies a comparison principle and a strong maximum principle. While the latter fulfils weaker ones. It should be mentioned that in a recent paper~\cite{MS15TR}, the authors address this issue affirmatively for the case of the relativistic heat equation (i.e. Eq. \eqref{rel heat equation} with $V\equiv 0$). The present paper extends their results to Eq. \eqref{eq: gRHE eqn} with non-zero external force fields. As presented above, Eq. \eqref{eq: gRHE eqn} covers a much larger class of equations that are of importance in both theoretical research and practical applications.

We now summarize our assumptions and main results.

\subsection{Assumptions and main results}
\begin{assumption} Throughout the paper, we make the following assumptions.
\begin{enumerate}[(H1)]
\item $V\in C^2(\R^d)$,
\item $\varphi^*\in C^2(\R^d)$ is strictly convex (i.e., all eigenvalue of the Hessian $\nabla^2\varphi^*$ are positive). 
\end{enumerate}
%The potential 
%such that $e^{-V}\in L^1(\R^d)$ and $\div(\nabla\varphi^*(\nabla V))\leq 0$.
\end{assumption}
%\begin{example} 
%\label{ex: example}
%The assumptions are satisfied, for instance, in one dimensional space $d=1$, $\varphi=\varphi_c$ and $V(x)=|x|^\alpha$, for some $0\leq\alpha\leq 1$, see Remark \ref{re: computation}.
%\end{example}
We now describe our main results. Let $Lu$ denote the operator on the right hand side of Eq. \eqref{eq: gRHE eqn}, i.e.,
\begin{equation}
\label{eq: operator L}
Lu=\div[u\nabla\varphi^*(\nabla V)]+\div[u\nabla \varphi^*(\nabla \log u)].
\end{equation}
We consider both the stationary problem
\begin{equation}
\label{eq: stationary problem}
Lu=0,
\end{equation}
and the evolutionary one
\begin{equation}
\label{eq: evolutionary prob}
\partial_t u=Lu.
\end{equation}
The first theorem is a tangency principle~\cite{PS07} for the stationary problem.
\begin{theorem}[Tangency principle for the stationary problem]
\label{theo: tangency L}
\ \\
Suppose that $\Omega\subset \R^d$ is open and connected. Assume that $u, v\in C^2(\Omega)$ and $u,v>0$. If $Lu\geq Lv$ and $u\leq v$ in $\Omega$ and $u=v$ at some point in $\Omega$, then $u=v$ in $\Omega$.
\end{theorem}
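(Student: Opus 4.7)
The plan is to perform the logarithmic substitution $w=\log u$, which is the natural change of variables adapted to the gradient flow structure of the equation (the variational derivative of the Boltzmann entropy is $\log u$), and thereby reduce the tangency statement for the fully nonlinear $L$ to the classical strong maximum principle for a linear elliptic operator. Since $u,v\in C^2(\Omega)$ are strictly positive, the functions $w_1:=\log u$ and $w_2:=\log v$ lie in $C^2(\Omega)$. A direct computation using $\nabla u=u\nabla w$ gives $Lu=e^{w}M[w]$, where
\[
 M[w]:=\mathrm{Tr}\bigl(\nabla^2\varphi^{*}(\nabla w)\,\nabla^2 w\bigr)+\nabla w\cdot\nabla\varphi^{*}(\nabla w)+A(x)\cdot\nabla w+B(x),
\]
with $A(x):=\nabla\varphi^{*}(\nabla V(x))$ and $B(x):=\mathrm{Tr}(\nabla^2\varphi^{*}(\nabla V)\,\nabla^2 V)$. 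By (H2), the principal coefficient $\nabla^2\varphi^{*}(\nabla w)$ is symmetric positive definite, so $M$ is quasilinear elliptic; moreover the hypotheses translate into $w_1\le w_2$ in $\Omega$, $w_1(x_0)=w_2(x_0)$, and $e^{w_1}M[w_1]\ge e^{w_2}M[w_2]$.

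The next step is to linearize. Setting $h:=w_2-w_1\ge 0$ and invoking Hadamard's formula along the segment $w_\theta:=w_2-\theta h$, $\theta\in[0,1]$, I obtain
\[
 M[w_1]-M[w_2] = -\overline{A}(x):\nabla^2 h-\vec{\beta}(x)\cdot\nabla h,
\]
where $\overline{A}(x):=\int_0^1\nabla^2\varphi^{*}(\nabla w_\theta)\,d\theta$ is symmetric positive definite and $\vec{\beta}$ is a bounded continuous vector field gathering the lower-order variations. Dividing $e^{w_1}M[w_1]\ge e^{w_2}M[w_2]$ by $e^{w_1}>0$ yields $M[w_1]-M[w_2]\ge (e^{h}-1)M[w_2]$, and writing $e^{h}-1=h\int_0^1 e^{\theta h}\,d\theta$ I arrive at the linear elliptic differential inequality
\[
 \overline{A}:\nabla^2 h+\vec{\beta}\cdot\nabla h+c(x)\,h\;\le\;0 \quad\text{in }\Omega,
\]
with locally bounded coefficient $c(x):=M[w_2](x)\int_0^1 e^{\theta h(x)}\,d\theta$. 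Since $h\ge 0$ attains the interior value $h(x_0)=0$, the classical strong minimum principle for uniformly elliptic linear operators (which applies even without a sign restriction on the zeroth-order term, provided the extremal value is zero) forces $h\equiv 0$ on the connected open set $\Omega$, hence $u\equiv v$.

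The main obstacle is precisely the exponential mismatch between the multiplicative factors $e^{w_1}$ and $e^{w_2}$: in contrast to the case $V\equiv 0$ treated in \cite{MS15TR}, one cannot reduce $Lu\ge Lv$ directly to $M[w_1]\ge M[w_2]$, and the price to pay is the appearance of a zeroth-order term $c(x)$ of indefinite sign. Handling this is exactly where one must invoke the sign-free version of the strong maximum principle, whose hypothesis that the touching value of $h$ equals $0$ is satisfied here by construction; verifying the local boundedness of $\overline{A}$, $\vec{\beta}$ and $c$ on compact subsets of $\Omega$ reduces to the $C^2$ regularity of $u$ and $v$ and to (H1)--(H2).
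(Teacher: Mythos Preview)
Your argument is correct and shares the paper's key step: the substitution $w=\log u$, giving $Lu=e^{w}Q[w]$ with $Q$ the quasilinear elliptic operator of \eqref{eq: operator Q}. From there the two proofs diverge in execution. The paper establishes a tangency principle for $Q$ by a direct appeal to \cite[Theorem~2.1.3]{PS07} (exploiting that the coefficients of $Q$ do not depend on $w$) and then asserts in one line that Theorem~\ref{theo: tangency L} follows ``since the exponential transformation is positive and monotone.'' You instead carry out the linearization along the segment $w_\theta$ explicitly and reduce to the classical strong minimum principle for a linear operator.

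What your more hands-on route buys is rigor at the transfer step. As you correctly point out, $Lu\ge Lv$ reads $e^{w_1}Q[w_1]\ge e^{w_2}Q[w_2]$, which does \emph{not} yield $Q[w_1]\ge Q[w_2]$ under $w_1\le w_2$, so the paper's passage from Proposition~\ref{prop: tangency Q} to Theorem~\ref{theo: tangency L} is, strictly speaking, incomplete. Your device of absorbing $(e^{h}-1)M[w_2]$ into a zeroth-order coefficient $c(x)$ of indefinite sign, and then invoking the strong minimum principle at the zero interior minimum of $h$, closes this gap cleanly. A shorter fix in the paper's spirit would be to apply \cite[Theorem~2.1.3]{PS07} directly to the operator $\widehat Q[w]:=e^{w}Q[w]$, which now depends on $w$ but is still elliptic with $C^1$ coefficients. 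One minor caveat applies to both routes: the Hadamard linearization (and the $C^1$-in-$p$ hypothesis used in \cite{PS07}) requires $p\mapsto\nabla^2\varphi^*(p)$ to be differentiable, i.e.\ effectively $\varphi^*\in C^3$, slightly stronger than the $C^2$ stated in (H2).
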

We obtain the following strong maximum principle as a direct consequence of the tangency principle.
\begin{theorem}[Strong maximum principle for the stationary problem]
\label{theo: strong max L}
\ \\
Suppose that $\Omega\subset \R^d$ is open and connected. Assume $u\in C^2(\Omega)\cap C(\bar{\Omega})$ and $u>0$. 
%If $Lu\geq 0$ in $\Omega$ and $u$ attains its maximum over $\bar{\Omega}$ at an interior point, then $u$ is constant within $\Omega$. 
\begin{enumerate}[(1)]
\item Additionally, assume that $\div(\nabla\varphi^*(\nabla V))\leq 0$. If $Lu\geq 0$ in $\Omega$ and $u$ attains its maximum over $\bar{\Omega}$ at an interior point, then $u$ is constant within $\Omega$. 
\item Similarly, assume in addition that $\div(\nabla\varphi^*(\nabla V))\geq 0$. If $Lu\leq 0$ in $U$ and $u$ attains its minimum over $\bar{\Omega}$ at an interior point, then $u$ is constant within $\Omega$. 
\end{enumerate}
\end{theorem}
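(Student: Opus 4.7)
The plan is to deduce both parts directly from Theorem~\ref{theo: tangency L} by taking as competitor the constant function equal to the extremum of $u$. The first thing I would record is that for a positive constant function $v\equiv k>0$, we have $\nabla v=0$, $\nabla\log v=0$, hence $\nabla\varphi^*(\nabla\log v)=\nabla\varphi^*(0)$ is a \emph{constant} vector, whose divergence vanishes. Consequently
\begin{equation*}
Lv \;=\; \div[v\,\nabla\varphi^*(\nabla V)] + \div[v\,\nabla\varphi^*(0)] \;=\; k\,\div\bigl(\nabla\varphi^*(\nabla V)\bigr).
\end{equation*}
It is precisely through this identity that the extra sign assumption on $\div(\nabla\varphi^*(\nabla V))$ enters the argument.

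For part~(1), I would set $M:=\max_{\bar\Omega}u$ and take $v\equiv M$. Since $u>0$ we have $M>0$, so both $u,v\in C^2(\Omega)$ are strictly positive; by hypothesis there is an interior point $x_0\in\Omega$ with $u(x_0)=M=v(x_0)$, and $u\leq v$ everywhere on $\Omega$. The identity above, combined with the hypothesis $\div(\nabla\varphi^*(\nabla V))\leq 0$, yields $Lv\leq 0\leq Lu$ in $\Omega$. Every hypothesis of Theorem~\ref{theo: tangency L} is then satisfied, and the conclusion $u\equiv v\equiv M$ on $\Omega$ follows immediately.

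For part~(2), I would set $m:=\min_{\bar\Omega}u>0$ and apply Theorem~\ref{theo: tangency L} with the roles of the two competitors swapped: let $w\equiv m$ play the role of the lower function and $u$ the upper one. Then $w\leq u$ on $\Omega$ with equality at the interior minimizer, and now the sign assumption $\div(\nabla\varphi^*(\nabla V))\geq 0$ gives $Lw\geq 0\geq Lu$, which is the correct direction $Lw\geq Lu$ required by the tangency principle. Theorem~\ref{theo: tangency L} then forces $u\equiv m$ on $\Omega$.

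There is essentially no analytic obstacle here beyond the one-line calculation of $Lv$ on constants; the only point requiring a little care is the bookkeeping in part~(2), where the roles of the two competitors in Theorem~\ref{theo: tangency L} must be interchanged so that the inequalities $w\leq u$ and $Lw\geq Lu$ are both oriented in the way demanded by the tangency statement.
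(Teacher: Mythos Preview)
Your argument is correct and is essentially the paper's own proof: both deduce the strong maximum principle from the tangency principle by comparing $u$ with the constant competitor equal to its extremum, the sign condition on $\div(\nabla\varphi^*(\nabla V))$ entering exactly through the value of the operator on constants. The only cosmetic difference is that the paper carries this out at the level of the transformed operator $Q$ (Proposition~\ref{prop: max Q}) and then pulls back via $u=e^{w}$, whereas you apply Theorem~\ref{theo: tangency L} directly; the content is identical.
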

A comparison principle also holds true for the stationary problem.
\begin{theorem}[Comparison principle for the stationary problem]
\ \\
\label{theo: comparison L}
Suppose $u,v\in C^2(\Omega)\cap C(\bar{\Omega})$ with $u,v>0$ and satisfy $Lu\geq Lv$ in $\Omega$. If $u\leq v$ on $\partial \Omega$, then $u\leq v$ in $\Omega$.
\end{theorem}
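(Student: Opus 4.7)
The plan is to argue by contradiction, reducing to the tangency principle (Theorem \ref{theo: tangency L}) via the logarithmic transformation $\psi = \log u$ that matches the gradient-flow structure of the equation. Suppose, toward a contradiction, that $u(x_0) > v(x_0)$ at some $x_0 \in \Omega$. Since $u \leq v$ on $\partial\Omega$ and $u, v$ are continuous and strictly positive on $\bar\Omega$, the function $\log u - \log v$ attains a positive maximum $c_0 > 0$ at an interior point $x^\star \in \Omega$. The crucial structural property of $L$ motivating this choice is the scale invariance $L(\lambda u) = \lambda L u$ for $\lambda > 0$, which follows at once since $\nabla \log(\lambda u) = \nabla \log u$.

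Setting $\tilde v := e^{c_0} v$, one obtains a natural comparison function: $u \leq \tilde v$ on $\bar\Omega$ with equality at the interior point $x^\star$, strict inequality $u < \tilde v$ on $\partial\Omega$, and $L \tilde v = e^{c_0} L v$ throughout $\Omega$. If the inequality $Lu \geq L \tilde v$ held in $\Omega$, Theorem \ref{theo: tangency L} applied to $(u, \tilde v)$ would force $u \equiv \tilde v$, contradicting the strict boundary inequality. The main obstacle is thus to justify $Lu \geq L \tilde v = e^{c_0} L v$ starting only from $Lu \geq L v$---something not automatic when $L v$ changes sign.

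To circumvent this obstacle, I pass to the log-variables $\psi = \log u$, $\eta = \log v$; a direct computation shows $L u = u\,\tilde L \psi$ for the operator
\[
\tilde L \psi := \nabla \psi \cdot \bigl(\nabla\varphi^*(\nabla V) + \nabla\varphi^*(\nabla\psi)\bigr) + \div\bigl(\nabla\varphi^*(\nabla V)\bigr) + \div\bigl(\nabla\varphi^*(\nabla\psi)\bigr),
\]
which depends on $\psi$ only through its gradient and Hessian; in particular $\tilde L(\eta + c_0) = \tilde L \eta$. By assumption (H2) the principal part of $\tilde L$ is uniformly elliptic, so the linearization around $\eta + c_0$ produces a linear elliptic operator on $\psi - (\eta + c_0)$ with \emph{no zeroth-order coefficient}---precisely the setting where the classical strong maximum principle applies. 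Using the tangency at $x^\star$ (where $u = \tilde v$ and $\nabla\psi(x^\star) = \nabla(\eta + c_0)(x^\star)$), the weighted hypothesis $u\,\tilde L \psi \geq v\,\tilde L \eta$ reduces at $x^\star$ to a pointwise inequality for $\tilde L$ alone; combined with the Hessian condition $\nabla^2(\psi - \eta - c_0)(x^\star) \leq 0$ and the strong maximum principle for the linearization, this propagates $\psi \equiv \eta + c_0$ throughout the connected component of $\Omega$ containing $x^\star$, and by connectedness throughout $\Omega$. This contradicts $\psi < \eta + c_0$ on $\partial\Omega$ and finishes the argument.
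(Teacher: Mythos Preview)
There is a genuine gap in the propagation step. You correctly isolate the obstacle: from $Lu\ge Lv$, i.e.\ $e^{\psi}\tilde L\psi\ge e^{\eta}\tilde L\eta$, one cannot read off $\tilde L\psi\ge\tilde L(\eta+c_0)=\tilde L\eta$. But your proposed fix does not close it. Invoking the strong maximum principle for the linearization of $\tilde L$ (which, as you say, has no zeroth-order term) would require the differential inequality $\tilde L\psi\ge\tilde L\eta$ on a full \emph{neighbourhood} of $x^\star$, not merely pointwise data at $x^\star$; and at $x^\star$ itself the touching conditions $\nabla\psi=\nabla\eta$, $\nabla^2(\psi-\eta)\le 0$ yield only $\tilde L\psi(x^\star)\le\tilde L\eta(x^\star)$, which is the wrong sign. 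If instead you linearize the full weighted map $\psi\mapsto e^{\psi}\tilde L\psi$, a zeroth-order coefficient proportional to $\tilde L\eta$ of indeterminate sign appears, and after the shift $\zeta=\psi-\eta-c_0$ an inhomogeneous right-hand side $-(e^{c_0}-1)\,e^{\eta}\tilde L\eta$ remains; since $\tilde L\eta=Lv/v$ has no prescribed sign, Hopf's lemma does not apply. In short, nothing in the argument manufactures an inequality $\mathcal L\zeta\ge 0$ on an open set, so the claimed propagation $\psi\equiv\eta+c_0$ is unjustified.

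For comparison, the paper's route is structurally different and does not go through a touching-point or scale-invariance argument for this theorem. After the substitution $w=\log u$ it records that the transformed operator $Q$ (your $\tilde L$) is quasilinear elliptic with coefficients independent of the unknown, establishes the comparison principle for $Q$ by a direct appeal to Pucci--Serrin (Theorem~2.1.4 there), and then simply transfers the conclusion back to $L$ using the monotonicity of the exponential. The passage between the hypotheses on $L$ and on $Q$ is treated in the paper as a one-line consequence of the transformation; the very difficulty you flag---that $Lu\ge Lv$ is $u\,Qw\ge v\,Qw'$ rather than $Qw\ge Qw'$---is the step the paper takes for granted.
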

Next we deal with the evolutionary case. Let $\Omega\subset \R^d$ be an open set and $T>0$ be given. We denote the space-time domain by $\Omega_T:=\Omega\times [0,T]$. The boundary of $\Omega_T$ is $=\overline{\Omega_T}\setminus \Omega_T$. Finally, $C^{2,1}(\Omega_T)$ denotes the set of functions of $(x,t)\in\Omega_T$ which is $C^2$ in space, and $C^1$ in time. 
\begin{theorem}[Comparison principle for the evolutionary problem]
\ \\
\label{theo: comparison L evol}
Suppose that $u,v\in C^{2,1}(\Omega_T)\cap C(\bar{\Omega_T}), u,v>0$ satisfying $\partial_t u-L u\leq \partial_t v-Lv$ in $\Omega$. Then $\max\limits_{\overline{\Omega_T}}(u-v)=\max\limits_{\Gamma_T}(u-v)$. In particular, if $u\leq v$ on $\Gamma_T$, then $u\leq v$ in $\Omega_T$.
\end{theorem}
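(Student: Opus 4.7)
I would convert the nonlinear differential inequality for $\phi:=u-v$ into a linear parabolic one by linearising $L$ along the straight-line path $u_s:=(1-s)v+su$, and then invoke the classical weak maximum principle for linear second-order parabolic operators. The substitution $w=\log u$ highlighted in the Introduction is built into the linearisation: it is what makes the principal coefficient of the resulting linear operator uniformly elliptic, via (H2).

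\textbf{Linearisation.} Since $u,v>0$ on the compact set $\overline{\Omega_T}$, each $u_s\ge\min\{u,v\}>0$, and by the fundamental theorem of calculus
\begin{equation*}
Lu - Lv \;=\; \int_0^1 DL[u_s](\phi)\,ds.
\end{equation*}
A direct computation of the Fr\'echet derivative, using $\partial_\varepsilon\nabla\log(u+\varepsilon\phi)|_{\varepsilon=0}=\nabla(\phi/u)$ and regrouping, yields
\begin{equation*}
DL[u](\phi) \;=\; \operatorname{div}\!\bigl[\nabla^2\varphi^*(\nabla\log u)\,\nabla\phi\bigr] + \operatorname{div}\!\bigl[\phi\,B(u)\bigr],
\end{equation*}
with $B(u):=\nabla\varphi^*(\nabla V)+\nabla\varphi^*(\nabla\log u)-\nabla^2\varphi^*(\nabla\log u)\,\nabla\log u$. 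Averaging over $s$, the hypothesis $\partial_t u-Lu\le\partial_t v-Lv$ rewrites as the linear divergence-form parabolic inequality
\begin{equation*}
\partial_t\phi - \operatorname{div}(\bar A\,\nabla\phi) - \operatorname{div}(\phi\,\bar B) \;\le\; 0 \quad\text{in }\Omega_T,
\end{equation*}
where $\bar A:=\int_0^1\nabla^2\varphi^*(\nabla\log u_s)\,ds$ is symmetric positive definite by (H2) together with the positivity of each $u_s$, and $\bar B:=\int_0^1 B(u_s)\,ds$ together with its divergence is bounded on $\overline{\Omega_T}$ by (H1) and the $C^{2,1}$-regularity of $u$ and $v$.

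\textbf{Maximum principle and conclusion.} The above is a linear second-order parabolic inequality with uniformly elliptic principal part and bounded lower-order coefficients, to which the classical weak parabolic maximum principle applies (see e.g.\ Evans \S 7.1.4 or Protter--Weinberger~\cite{PS07}). Expanding into non-divergence form, the zeroth-order coefficient is $\operatorname{div}\bar B$, which is not signed a priori; the standard device is to introduce the exponential weight $\tilde\phi:=e^{-\lambda t}\phi$ with $\lambda>\sup_{\overline{\Omega_T}}|\operatorname{div}\bar B|$, reducing the problem to a parabolic operator with nonpositive zeroth-order coefficient. Combining this with the substitution $\psi:=\tilde\phi-e^{-\lambda t}\max_{\Gamma_T}\phi$ and applying the weak maximum principle gives $\max_{\overline{\Omega_T}}(u-v)=\max_{\Gamma_T}(u-v)$, from which the comparison consequence is immediate.

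\textbf{Main obstacle.} The delicate step is obtaining the sharp equality of maxima (rather than merely the comparison assertion, which follows at once from the weak maximum principle). This is where the gradient-flow-compatible transformation $w=\log u$ is essential: a direct check shows that the equation in the $w$-variable, $\partial_t w=\tilde L w$, has the property that the Fr\'echet derivative $D\tilde L[w]$ contains only first- and second-order derivatives of the perturbation, so that in the $w$-variable the strong form of the parabolic maximum principle applies without any exponential-weight trick, and the result then translates back to $u$ and $v$. All remaining verifications (the Fr\'echet derivative computation, the passage between divergence and non-divergence forms, and the textbook parabolic estimates) are routine given the regularity hypotheses.
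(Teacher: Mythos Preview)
Your linearisation of $Lu-Lv$ along $u_s=(1-s)v+su$ is correct and, together with the exponential-in-time weight, does prove the comparison clause (if $u\le v$ on $\Gamma_T$ then $u\le v$ in $\Omega_T$). But the paper proceeds differently: it applies the substitution $w=\log u$ \emph{before} linearising, writing $\partial_t u-Lu=u\,(\partial_t w-Qw)$, and then linearises $Qw-Qw'$ (not $Lu-Lv$) via the mean-value theorem. The essential gain is that the coefficients of $Q$ depend only on $x$ and $\nabla w$, \emph{not on $w$ itself}, so the resulting linear operator $\tilde Q$ carries no zeroth-order term at all. The parabolic weak maximum principle with $c\equiv0$ then yields $\max_{\overline{\Omega_T}}(w-w')=\max_{\Gamma_T}(w-w')$ directly---no weight is needed.

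Your linearisation in the $u$-variable, by contrast, leaves the zeroth-order coefficient $\operatorname{div}\bar B$, and here your argument for the equality of maxima breaks down. The weight $e^{-\lambda t}$ converts this coefficient to one of definite sign, but the weak maximum principle with a signed (nonzero) zeroth-order term gives only $\max\tilde\phi\le\max_{\Gamma_T}\tilde\phi^{+}$, not equality. The further substitution $\psi=\tilde\phi-e^{-\lambda t}M$ does not rescue this: $e^{-\lambda t}M$ is not annihilated by the operator (a direct computation leaves the residual $c\,M\,e^{-\lambda t}$ with $c=\operatorname{div}\bar B$ of indefinite sign), so $\psi$ does not satisfy a clean sub-solution inequality. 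You correctly flag this as the obstacle and gesture toward the log variable, but carrying that out means redoing the linearisation in $w$---which is exactly the paper's route---and renders your $u$-variable linearisation redundant. In short: the paper's order of operations (transform first, linearise second) is what eliminates the zeroth-order term, and that is the key idea missing from your plan.
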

As a consequence of the comparison principle we obtain the weak maximum principle for evolutionary problem.
\begin{theorem}[Weak maximum principle for the evolutionary problem]
\label{theo: weak max L evol}
\ \\
Suppose that $u\in C^{2,1}(\Omega_T)\cap C(\overline{\Omega_T})$ and $u>0$. The following statement hold.
\begin{enumerate}
\item Additionally, assume that $\div(\nabla\varphi^*(\nabla V))\leq 0$. If $\partial_t u-Lu\leq 0$, then a maximum of $u$ in $\overline{\Omega_T}$ is attained at the boundary $\Gamma_T$.
\item Similarly, assume in addition that $\div(\nabla\varphi^*(\nabla V))\geq 0$. If $\partial_t u-Lu\geq 0$, then a minimum of $u$ in $\overline{\Omega_T}$ is attained at the boundary $\Gamma_T$.
\end{enumerate}
\end{theorem}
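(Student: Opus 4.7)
My plan is to derive Theorem~\ref{theo: weak max L evol} as a direct corollary of the comparison principle (Theorem~\ref{theo: comparison L evol}), using a positive constant as the comparison function. The key preliminary observation is that for $v \equiv M > 0$ a constant, one has $\partial_t v - Lv = -M\,\div(\nabla\varphi^*(\nabla V))$. Indeed, $\partial_t v = 0$; the first term in $Lv$ equals $\div[M\,\nabla\varphi^*(\nabla V)] = M\,\div(\nabla\varphi^*(\nabla V))$ since $\nabla M = 0$; and the second term is $\div[M\,\nabla\varphi^*(0)]$, because $\nabla \log M = 0$, which vanishes since $M\,\nabla\varphi^*(0)$ is a constant vector in $\R^d$.

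For part (1), I take $M := \max_{\Gamma_T} u$, which is finite and strictly positive because $u > 0$ is continuous (with $\Omega$ tacitly bounded, as is implicit in the formulation). Setting $v \equiv M$, the hypothesis $\div(\nabla\varphi^*(\nabla V)) \leq 0$ together with the identity above yields $\partial_t v - Lv \geq 0$, so $\partial_t u - Lu \leq 0 \leq \partial_t v - Lv$ throughout $\Omega_T$, while $u \leq M = v$ on $\Gamma_T$ by the very choice of $M$. Theorem~\ref{theo: comparison L evol} then delivers $u \leq M$ on all of $\overline{\Omega_T}$, which is exactly the statement that the maximum of $u$ on $\overline{\Omega_T}$ is attained on $\Gamma_T$.

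Part (2) is entirely symmetric: I take $M := \min_{\Gamma_T} u > 0$, let $v \equiv M$, and observe that the hypothesis $\div(\nabla\varphi^*(\nabla V)) \geq 0$ now gives $\partial_t v - Lv \leq 0 \leq \partial_t u - Lu$ in $\Omega_T$, while $v = M \leq u$ on $\Gamma_T$; a second application of Theorem~\ref{theo: comparison L evol} produces $v \leq u$ on $\overline{\Omega_T}$, so the minimum of $u$ is attained on $\Gamma_T$. There is no genuine obstacle to this argument; the only delicate step is the structural computation that the nonlinear diffusion term $\div[v\,\nabla\varphi^*(\nabla \log v)]$ vanishes identically for constant $v$, because $\nabla\varphi^*(0)$ is just a constant vector, and it is precisely this cancellation that channels the sign condition on $\div(\nabla\varphi^*(\nabla V))$ into the comparison-principle inequality required for the constant competitor.
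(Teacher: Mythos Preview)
Your argument is correct and is essentially the paper's approach. The paper derives Theorem~\ref{theo: weak max L evol} from Proposition~\ref{prop: weak max Q evol}, which in turn is obtained from the comparison principle for $Q$ (Proposition~\ref{prop: comparison Q evol}) by comparing with the constant $w'=0$; after the exponential transformation this is precisely comparison of $u$ with the constant function $v\equiv 1$. You do the same thing one level up, invoking Theorem~\ref{theo: comparison L evol} directly and comparing with $v\equiv M=\max_{\Gamma_T}u$ (respectively $\min_{\Gamma_T}u$), which is a harmless variant of the same constant-competitor argument and even slightly streamlines the exposition by skipping the intermediate $Q$-level statement.
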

The underlying idea of the proofs of the these theorems is to transform
the operator $L$ into a strictly elliptic operator $Q$ that is independent of the function itself. The assertions for $Q$ are deduced from well-known results on maximum/comparison principle for quasi-nonlinear partial differential equations in~\cite{Evans98,GT01,PS07}. Then we obtain the corresponding results for the original operator $L$ using the monotonicity of the transformation. 
%
%mainly based on an observation that they become (non)uniformly elliptic under an transformation, which is inverse of the Hopf-Cole transformation. It will become clear that this transformation matches perfectly well with the structure of Eq. \eqref{eq: gRHE eqn}.

\subsection{Organisation of the paper}
The rest of the paper is organized as follows. In Section~\ref{sec: derivation}, we summarize the derivation of the main equation. In Section \ref{sec: eliptic}, we prove the main theorems for the stationary problem. Section \ref{sec: parabolic} is devoted to the evolutionary case. Finally, we conclude the paper and discuss future perspectives in Section \ref{sec: conclusion}.
\section{On the derivation of the main equation}
\label{sec: derivation}
In this section, we derive Eq. \eqref{eq: gRHE eqn} and prove that, under an appropriate assumption on the growth of $V$, it has an unique equilibrium solution given by the Gibbs measure.

In a seminal paper~\cite{JKO98}, Jordan, Kinderleher and Otto proved a remarkable result that the classical heat equation
\begin{equation}
\label{heat eqn}
\partial_t u=\Delta u
\end{equation}
is a gradient flow of the Boltzmann entropy $S(u)=\int (u\log u-u)$ with respect to the Wasserstein metric. This statement means that the heat equation can be obtained by computing the time discrete solution at the time step $k$ as follows (now known as JKO-scheme)
\begin{equation}
\label{eq: JKO scheme}
u^k=\mathrm{argmin}_{u\in \mathcal{P}(\R^d)} S(u)+hW_{2,h}^2(u^{k-1},u),
\end{equation}
where the Wasserstein distance $W_{2,h}$ is defined via
\begin{equation*}
W_{2,h}^2(u^{k-1},u)=\inf_{\gamma\in \Gamma(u^{k-1},u)}\int_{\R^d\times\R^d}\Big(\frac{|x-y|}{h}\Big)^2\gamma(dx,dy),
\end{equation*}
and passing to the limit as the time step $h$ goes to $0$. In the expressions above, $\mathcal{P}(\R^d)$ is the set of all probability measures on $\R^d$ and $\Gamma(\mu,\nu)$ denotes the set of probability measures on $\R^{2d}$ whose marginals are $\mu$ and $\nu$. To obtain the linear drift-diffusion equation
\begin{equation}
\label{eq: FP eqn}
\partial_tu=\div(u\nabla V)+\Delta u,
\end{equation}  
one simply replaces the Boltzmann entropy $S$ by the free energy $F=S+\int V u$.
Since then the paper \cite{JKO98} has embarked a lot of research that links many branches of mathematics together such as partial differential equation, optimal transportation, differential geometry and probability theory, see e.g., \cite{ADPZ11, DLR13, DPZ13a, DPZ13b}, the monographs \cite{AGS08, Vil03} and references therein for more information.

In \cite{Bre03}, Brenier formally derived the relativistic heat equation (i.e., Eq. \eqref{rel heat equation} with $V=0$), from the scheme \eqref{eq: JKO scheme} replacing $W_{2,h}^2$ by
\begin{equation}
W_c=\inf_{\gamma\in \Gamma(u^{k-1},u)}\int_{\R^d\times\R^d}\varphi_c\Big(\frac{x-y}{h}\Big)\gamma(dx,dy),
\end{equation}
where $\varphi_c$ is the relativistic cost function \eqref{eq: rel cost}.
Later on, McCann and Puel~\cite{CP09} proved this rigorously and extended to the general case with $\varphi$ given in \eqref{eq: general cost}, obtaining the following equation
\begin{equation}
\label{eq: eqn without force field}
\partial_t u=\div[u\nabla \varphi^*(\nabla \log u)],
\end{equation} 
which is Eq. \eqref{eq: gRHE eqn} where the external force is absent. 

In contrast to the linear drift-diffusion equation \eqref{eq: FP eqn}, it is not obvious how to incorporate the external force fields into Eq.~\eqref{eq: eqn without force field}. One possibility, as in the classical setting, is to replace the Boltzmann entropy $S=\int (u\log u-u)$ by the free energy $F=S+\int Vu$. This amounts to substitute $\frac{\delta S}{\delta u}=\log u$ in \eqref{eq: eqn without force field} by $\frac{\delta F}{\delta u}=\log u+\nabla V$ leading to the following equation
\begin{equation*}
\partial_t u=\div[u\nabla \varphi^*(\log u+ \nabla V)].
\end{equation*} 
For the relativistic cost, since $\nabla \varphi_c^*(z)=\frac{z}{\sqrt{1+\frac{|z|^2}{c^2}}}$ the above equation becomes
\begin{equation}
\partial_t u=\div\left[u\frac{\log u+ \nabla V}{\sqrt{1+\frac{|\log u+ \nabla V|^2}{c^2}}}\right].
\end{equation}
Note that due to the nonlinearity of $\nabla \varphi^*_c(z)$, this equation is different from Eq. \eqref{rel heat equation}. In comparison with the classical equation \eqref{eq: FP eqn}, the drift and the diffusion terms are separated since $\nabla\varphi^*(z)=z$ is linear. We do not know whether the equation above has any physically/biologically meaning. Therefore, we take different approach. Before that we recalling the derivation of \eqref{rel heat equation}. To obtain this equation, different methods have been exploited before in the literature. For instance, \cite{Zakari97} used continued-fraction technique to derive \eqref{rel heat equation} from the hierarchy of hydrodynamic equations. As another example, in the context of the flux-limited chemotaxis modelling, in \cite{BBNS10} the authors obtained it by optimizing the the flux density of particles along the trajectory induced by the chemoattractant. Note that in Eq. \eqref{rel heat equation}, the drift term is equal to $\div[u\nabla \varphi^*_c(\nabla V)]$. Motivated by this observation, we now include the external force into \eqref{eq: eqn without force field} simply by generalising the above term to $\div[u\nabla \varphi^*(\nabla V)]$ and adding it to \eqref{eq: eqn without force field}. This results in the following equation
\begin{equation}
\partial_t u=\div[u\nabla\varphi^*(\nabla V)]+\div[u\nabla \varphi^*(\nabla \log u)],
\end{equation}
which is exactly Eq. \eqref{eq: gRHE eqn} that we started with. When $\varphi=\varphi_c$, we do get Eq. \eqref{rel heat equation}. One important property of both Eq. \eqref{rel heat equation} and the classical linearly drift-diffusion equation is that the Gibbs measure $Z^{-1}\exp(-V)$ is the unique equilibrium solution.  In Lemma \ref{lem: Gibbs} below, we prove that this property is preserved in Eq. \eqref{eq: gRHE eqn}. As an additional observation, since $\varphi^*_c(z)\to z$ as $c\to \infty$, at least formally we recover the classical drift-diffusion equation from Eq. \eqref{rel heat equation} as $c\to \infty$. It would be interesting to investigate whether one could derive Eq. \eqref{eq: gRHE eqn} solely from the optimal transport theory as in \cite{CP09} and recover its classical counterpart as $c$ goes to infinity . We provide more discussion about this in Section \ref{sec: conclusion}.

We now show the following. 

\begin{lemma} 
\label{lem: Gibbs}
Eq. \eqref{eq: stationary eqn} has an equilibrium solution which is given by
\begin{equation}
\label{eq: Botzmann dis}
u_{eq}(x)=C\exp(-V(x)),
\end{equation}
for some constant $C$. If $V$ satisfies that $\int \exp(-V(x))\,dx<\infty$, $u_{eq}$ can be normalized to become the Gibbs (probability) measure.
\end{lemma}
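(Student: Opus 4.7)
My plan is a direct verification: substitute the ansatz $u_{eq}(x)=C\exp(-V(x))$ into the operator $L$ defined in \eqref{eq: operator L} and exploit the radial form of $\varphi$ prescribed in \eqref{eq: general cost} to make the drift and the entropic fluxes cancel. The only non-trivial ingredient is the observation that a radial (in particular even) $\varphi$ forces $\varphi^{*}$ to be even, so that $\nabla\varphi^{*}$ is odd.

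First I would compute $\nabla\log u_{eq}=\nabla(\log C - V)=-\nabla V$, so that
\begin{equation*}
Lu_{eq}=\div\bigl[u_{eq}\,\nabla\varphi^{*}(\nabla V)\bigr]+\div\bigl[u_{eq}\,\nabla\varphi^{*}(-\nabla V)\bigr].
\end{equation*}
Since \eqref{eq: general cost} defines $\varphi(x)=\tilde c(|x|)$ inside $\{|x|\le c\}$ and $+\infty$ outside, $\varphi$ is an even function of $x$. A change of variables $x\mapsto -x$ in the Legendre transform then gives
\begin{equation*}
\varphi^{*}(-y)=\sup_{x\in\R^d}\bigl[(-y)\cdot x-\varphi(x)\bigr]=\sup_{x\in\R^d}\bigl[y\cdot(-x)-\varphi(-x)\bigr]=\varphi^{*}(y),
\end{equation*}
and differentiating yields $\nabla\varphi^{*}(-y)=-\nabla\varphi^{*}(y)$ for every $y\in\R^d$. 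Applying this with $y=\nabla V(x)$, the two flux vectors $u_{eq}\nabla\varphi^{*}(\nabla V)$ and $u_{eq}\nabla\varphi^{*}(-\nabla V)$ are exact opposites, so their divergences cancel and $Lu_{eq}\equiv 0$.

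For the normalization claim, under the hypothesis $\int_{\R^d}\exp(-V)\,dx<\infty$ I would simply take $C=\bigl(\int_{\R^d}\exp(-V(x))\,dx\bigr)^{-1}$, which turns $u_{eq}$ into a probability density, i.e.\ the Gibbs measure associated with $V$. I do not foresee a genuine obstacle in this argument: the entire lemma reduces to the oddness of $\nabla\varphi^{*}$, which is a structural consequence of the radial form of $\varphi$ assumed in \eqref{eq: general cost}. In this sense the lemma also serves as a consistency check that the way the external force field was incorporated into \eqref{eq: gRHE eqn} in Section \ref{sec: derivation} mirrors the classical drift-diffusion picture, where the Gibbs density is the (unique) equilibrium of \eqref{eq: FP eqn}.
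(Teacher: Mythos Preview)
Your verification is correct and, for the lemma as literally stated (existence of an equilibrium of Gibbs form), it is entirely adequate. The key structural point---that $\varphi$ radial implies $\varphi^{*}$ even and hence $\nabla\varphi^{*}$ odd---is exactly the property the paper labels (P1), and your Legendre-transform argument for it is clean.

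The paper, however, runs the argument in the opposite direction. It \emph{defines} an equilibrium as a state annihilating the total flux, i.e.\ $\nabla\varphi^{*}(\nabla V)+\nabla\varphi^{*}(\nabla\log u_{eq})=0$, and then \emph{derives} the form $u_{eq}=C\exp(-V)$ from this condition. Besides the oddness (P1), this requires the injectivity of $\nabla\varphi^{*}$ (called (P2), a consequence of the strict convexity assumption (H2)) to pass from $\nabla\varphi^{*}(\nabla\log u_{eq})=\nabla\varphi^{*}(-\nabla V)$ to $\nabla\log u_{eq}=-\nabla V$. The payoff is that the paper's argument simultaneously proves \emph{uniqueness} of the zero-flux equilibrium, which is announced in the text surrounding the lemma but not in the lemma statement itself. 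Your direct substitution is shorter and uses one hypothesis fewer, but it does not by itself rule out other equilibria; if you want to match the paper's implicit uniqueness claim, you would need to add the injectivity step.
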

\begin{proof}
The proof will use the following properties of $\nabla\varphi^*$:
\begin{enumerate}[(P1)]
\item $\nabla\varphi^*$ is odd, i.e., $\nabla\varphi^*(z)=-\nabla\varphi^*(-z)$. This is because $\varphi^*(z)=f(|z|)$ for some function $f$.
\item $\nabla\varphi^*$ is injective, i.e., if $\nabla\varphi^*(z_1)=\nabla\varphi^*(z_2)$ then $z_1=z_2$. This is because $\varphi^*$ is a convex function that implies that $\nabla\varphi^*$ is monotone.
\end{enumerate}
We recall that $u$ is called an equilibrium solution of Eq.~\eqref{eq: gRHE eqn} if it annihilates the total flux. Now, suppose that $u_{eq}$ is an equilibrium solution, i.e.,
\begin{equation*}
\nabla\varphi^*(\nabla V)+\nabla \varphi^*(\nabla \log u_{eq})=0.
\end{equation*}
Hence by (P1)
\begin{equation*}
\nabla \varphi^*(\nabla \log u_{eq})=-\nabla\varphi^*(\nabla V)=\nabla\varphi^*(-\nabla V),
\end{equation*}
and then by $(P2)$, 
\begin{equation*}
\nabla \log u_{eq}=-\nabla V,
\end{equation*}
which implies that $u_{eq}=C\exp(-V)$ for some constant $C$. If $\int\exp(-V(x))\,dx<\infty$, there exists only one value of $C$, namely $C=\left(\int\exp(-V(x))\,dx\right)^{-1}$, such that $u_{eq}$ is a probability measure on $\R^d$. This is exactly the Gibbs measure. 

However, we note that Eq. \eqref{eq: gRHE eqn} is still meaningful without this extra condition. This is also true for the classical drift-diffusion equation. In addition, its Wasserstein gradient flow structure is rigorously well-defined without the extra assumption on the growth of $V$, see \cite{JKO98}.
\end{proof}

\section{The stationary problem}
\label{sec: eliptic}
%Let $Lu$ denotes the operator on the RHS of \eqref{eq: gRHE eqn}, i.e.,
%\begin{equation*}
%Lu=\div[u\nabla\varphi^*(\nabla V)]+\div[u\nabla \varphi^*(\nabla \log u)]=\div[u(\nabla\varphi^*(\nabla V)+\nabla \varphi^*(\nabla \log u))].
%\end{equation*} 
In this section we deal with the stationary problem \eqref{eq: evolutionary prob}, i.e.,
\begin{equation}
\label{eq: stationary eqn}
Lu=0,
\end{equation}
where the operator $L$ is defined in \eqref{eq: operator L}.

We consider a change of variable $w=\log u$, and transform Eq. \eqref{eq: stationary eqn} into the following one
\begin{equation}
\label{eq: stationary w}
Qw=0,
\end{equation}
where the operator $Q$ is defined by
\begin{equation}
\label{eq: Q}
Qw=\div[\nabla\varphi^*(\nabla w)]+\nabla w\cdot\nabla\varphi^*(\nabla w)+\nabla w\cdot \nabla \varphi^*(\nabla V)+\div \nabla \varphi^*(\nabla V).
\end{equation}
The transformation plays a crucial role in the present paper. We provide more discussion about it in Remark \ref{re: comment 1}.

We now show that the stationary problem \eqref{eq: stationary w} satisfies a tangency principle, a strong maximum principle and a comparison principle. Let us begin with the tangency principle.
\begin{proposition}[Tangency principle for the operator $Q$]
\label{prop: tangency Q}
Suppose that $\Omega\subset \R^d$ is open and connected and that $w, w'\in C^2(\Omega)$. If $Q w\geq Q w'$,  $w\leq w'$ in $\Omega$, and $w=w'$ at some point in $\Omega$, then $w=w'$ in $\Omega$.
\end{proposition}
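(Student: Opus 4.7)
The plan is to introduce $z := w' - w$ and reduce the statement to the classical strong minimum principle for a linear uniformly elliptic operator with no zeroth-order term, applied to $z$. By assumption, $z\geq 0$ on $\Omega$ and $z(x_0)=0$ at some interior $x_0$, so it suffices to produce a linear elliptic inequality $\mathcal{L} z\leq 0$ whose coefficients are locally regular enough for Hopf's theorem to apply; then $z\equiv 0$ on the connected set $\Omega$.

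The algebraic step is to expand $Qw-Qw'$ using the fundamental theorem of calculus along the line segment joining $\nabla w'$ and $\nabla w$. Setting
\[
A(x)\;:=\;\int_0^1 \nabla^2\varphi^{*}\!\bigl(\nabla w'(x) - t\,\nabla z(x)\bigr)\,dt,
\]
we get $\nabla\varphi^{*}(\nabla w)-\nabla\varphi^{*}(\nabla w') = -A(x)\nabla z$, so the divergence piece of $Qw-Qw'$ equals $-\div[A\nabla z]$. The quadratic piece splits as
\[
\nabla w\cdot\nabla\varphi^{*}(\nabla w)-\nabla w'\cdot\nabla\varphi^{*}(\nabla w')
= -\nabla z\cdot\nabla\varphi^{*}(\nabla w)\;-\;\nabla w'\cdot A(x)\nabla z,
\]
the $V$-drift piece contributes $-\nabla z\cdot\nabla\varphi^{*}(\nabla V)$, and the pure $V$-divergence piece cancels identically. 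Collecting everything, the hypothesis $Qw\geq Qw'$ is exactly
\[
\mathcal{L} z \;:=\; \div\bigl[A(x)\nabla z\bigr] \;+\; b(x)\cdot\nabla z \;\leq\; 0,
\qquad
b(x):=\nabla\varphi^{*}(\nabla w)+A(x)^{T}\nabla w'+\nabla\varphi^{*}(\nabla V).
\]
Crucially, no zeroth-order coefficient appears.

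It remains to verify the hypotheses of Hopf's strong minimum principle (e.g.\ Theorem 3.5 of Gilbarg--Trudinger). Since $w,w'\in C^2(\Omega)$ and $\varphi^{*}\in C^2$, the matrix field $A$ and the drift $b$ are continuous on $\Omega$. By (H2) every eigenvalue of $\nabla^2\varphi^{*}$ is strictly positive, hence $A(x)$ is symmetric positive-definite at every point and uniformly elliptic on any compact subdomain $\Omega'\Subset\Omega$. Applying the strong minimum principle for linear uniformly elliptic operators without zeroth-order term to $z\geq 0$, which attains its minimum $0$ at the interior point $x_0$, yields $z\equiv 0$ on any such $\Omega'$ containing $x_0$, and by connectedness on all of $\Omega$. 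Equivalently, one can apply the usual Hopf strong maximum principle to $-z$.

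The step I expect to be the main obstacle is the bookkeeping of the expansion of $Qw-Qw'$: the proof hinges on the observation that the linear-in-$w$ and divergence-in-$V$ terms of $Q$ combine in such a way that no zero-order coefficient $c(x)z$ survives. Had such a term appeared with the wrong sign, the classical strong minimum principle would have required the extra condition $c\leq 0$, which is not present in the statement. The cancellations above make this transformation $u\mapsto w=\log u$ particularly well-suited to the nonlinearity of $L$, and it is precisely the advantage anticipated in the introduction.
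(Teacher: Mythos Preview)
Your proof is correct and is, in substance, the same argument the paper uses --- only you carry it out explicitly whereas the paper invokes it as a black box. The paper observes that $Q$ can be written in quasilinear non-divergence form
\[
Qw=\sum_{i,j} a^{ij}(\nabla w)\,\partial^2_{ij}w + b(x,\nabla w),
\qquad a^{ij}(p)=(\nabla^2\varphi^*(p))_{ij},
\]
notes that the coefficients do not depend on $w$ itself, checks strict ellipticity from (H2), and then simply cites the tangency principle for quasilinear elliptic operators \cite[Theorem~2.1.3]{PS07}. That theorem is proved precisely by your linearization: one forms $z=w'-w$, uses the mean-value theorem along the segment from $\nabla w'$ to $\nabla w$ to rewrite $Qw-Qw'$ as a linear elliptic operator applied to $z$, observes that the absence of $w$-dependence in the coefficients kills the zeroth-order term, and finishes with Hopf's strong maximum/minimum principle. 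So your route and the paper's differ only in whether this reduction is displayed or cited; your version has the merit of making explicit exactly \emph{why} the transformation $w=\log u$ is the right one (it eliminates the $c(x)z$ term), which the paper comments on separately in Remark~\ref{re: comment 1}.

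One minor point worth flagging, present in both your argument and the paper's: to pass from the divergence-form expression $\div[A\nabla z]$ to a non-divergence operator with locally bounded first-order coefficients (as required for Gilbarg--Trudinger Theorem~3.5), or equivalently to apply the mean-value theorem to $\nabla^2\varphi^*$ in the principal part, one implicitly needs $\nabla^2\varphi^*$ to be locally Lipschitz, i.e.\ $\varphi^*\in C^{2,1}$ or $C^3$, slightly more than the stated hypothesis (H2). The paper glosses over this in the same way (it asserts the coefficients are ``continuously differentiable in $\nabla w$''), so this is not a defect of your approach relative to theirs.
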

\begin{proof}
We first show that $Q$ is a strictly elliptic operator. Indeed, since
\begin{equation*}
\div[\nabla\varphi^*(\nabla w)]=\sum_{i,j}\frac{\partial^2\varphi^*}{\partial z_i\partial z_j}(\nabla w)\frac{\partial^2 w}{\partial z_i\partial z_j}=\nabla^2\varphi^*(\nabla w):\nabla^2 w,
\end{equation*}
%Substituting into \eqref{eq: RHE w}, we obtain
%\begin{equation}
%\label{eq: RHE w explicit}
%\partial_t w=\nabla^2\varphi^*(\nabla w):\nabla^2 w+\nabla w\cdot\nabla\varphi^*(\nabla w)=Qw,
%\end{equation}
%where
the operator $Q$ is equal to
\begin{equation}
\label{eq: operator Q}
Qw=\nabla^2\varphi^*(\nabla w):\nabla^2 w+\nabla w\cdot\nabla\varphi^*(\nabla w)+\nabla w\cdot \nabla \varphi^*(\nabla V)+\div \nabla \varphi^*(\nabla V).
\end{equation}
We can write $Q$ in the form of a quasilinear operator studied in the by-now classical monograph \cite[Chapter 10]{GT01}
\begin{equation}
Qw=\sum_{i,j=1}^d a^{ij}(x,w,\nabla w)\partial^2_{ij} w+b(x,w,\nabla w),
\end{equation}
where the coefficients of $Q$, namely the functions $a^{ij}(x,z,p), i,j=1,\ldots,d$ and $b(x,z,p)$ are defined on $\Omega\times \R\times R^d$. Note that the coefficients are independent of $z$: $b$ depends on $x,p$, while $a$ depends only on $p$. More precisely,
\begin{equation}
a(x,z,p)=\nabla^2\varphi^*(p),\qquad b(x,z,p)=p\cdot\nabla\varphi^*(p)+p\cdot \nabla \varphi^*(\nabla V)+\div \nabla \varphi^*(\nabla V).
\end{equation}
%
%The principal part of the operator $Q$ is 
%\begin{equation}
%\nabla^2\varphi^*(\nabla w):\nabla^2 w.
%\end{equation}
Since $\varphi^*$ is strictly convex, the matrix $\nabla^2\varphi^*(z)$ is positive definite. Therefore, $Q$ is a strictly elliptic operator. Furthermore, since the coefficients of $Q$ are independent of $w$ and continuous differentiable in $\nabla w$ and $\nabla^2 w$, the assertion of the proposition is obtained by applying the tangency principle for nonlinear elliptic operators in \cite[Theorem 2.1.3]{PS07}.

\end{proof}
The tangency principle is a strong statement. It allows us to obtain the following strong maximum/minimum principle for the operator $Q$.
\begin{proposition}[Strong maximum principle for the operator $Q$]
\label{prop: max Q}
\ \\
Suppose that $\Omega\subset \R^d$ is open and connected. 
%If $Qw\geq 0$ in $\Omega$ and $w$ attains its maximum over $\bar{\Omega}$ at an interior point, then $w$ is constant within $\Omega$. 
%\end{proposition}
\begin{enumerate}[(1)]
\item Assume that $w\in C^2(\Omega)\cap C(\overline{\Omega})$ and that $\div(\nabla\varphi^*(\nabla V))\leq 0$. If $Qw\geq 0$ in $\Omega$ and $w$ attains its maximum over $\overline{\Omega}$ at an interior point, then $w$ is constant within $\Omega$. 
\item Similarly, assume that $w'\in C^2(\Omega)\cap C(\overline{\Omega})$ and that $\div(\nabla\varphi^*(\nabla V))\geq 0$. If $Qw'\leq 0$ in $\Omega$ and $w'$ attains its minimum over $\overline{\Omega}$ at an interior point, then $w'$ is constant within $\Omega$. 
\end{enumerate}
\end{proposition}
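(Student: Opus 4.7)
The plan is to derive the strong maximum/minimum principle as a direct corollary of the tangency principle (Proposition \ref{prop: tangency Q}), by comparing the given function with a suitable constant. For part (1), let $M := \max_{\overline{\Omega}} w$ and let $x_0 \in \Omega$ be the interior maximizer guaranteed by the hypothesis. I will take as the comparison function the constant $w' \equiv M$ and check the three ingredients of Proposition \ref{prop: tangency Q}: (i) $Qw \geq Qw'$ in $\Omega$; (ii) $w \leq w'$ in $\Omega$; (iii) $w(x_0) = w'(x_0)$. Items (ii) and (iii) are immediate from the definition of $M$.

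To verify (i), I first note that $\nabla \varphi^*(0) = 0$. This follows from the fact that $\nabla \varphi^*$ is odd, which is property (P1) established in the proof of Lemma \ref{lem: Gibbs} (it is an immediate consequence of the radial form \eqref{eq: general cost} of $\varphi$). Substituting the constant function $w' \equiv M$ into the explicit formula \eqref{eq: operator Q} for $Q$, the first three terms all vanish, since $\nabla w' = 0$, $\nabla^2 w' = 0$, and $\nabla\varphi^*(0)=0$; one is left with
\begin{equation*}
Qw' \;=\; \div\, \nabla\varphi^*(\nabla V).
\end{equation*}
The assumption $\div(\nabla\varphi^*(\nabla V)) \leq 0$ combined with $Qw \geq 0$ then yields $Qw \geq 0 \geq Qw'$, which is (i). Proposition \ref{prop: tangency Q} thus forces $w \equiv M$ in $\Omega$.

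Part (2) is obtained by a mirror argument. Set $m := \min_{\overline{\Omega}} w'$, attained at an interior point, and apply Proposition \ref{prop: tangency Q} to the pair $(u, v) = (m, w')$: one has $u \leq v$ in $\Omega$ with equality at the interior minimizer, while $Qu = \div\nabla\varphi^*(\nabla V) \geq 0 \geq Qw' = Qv$ by the second sign hypothesis. The tangency principle then gives $w' \equiv m$. I do not anticipate any real obstacle here; the whole argument is essentially a one-line reduction once the vanishing $\nabla\varphi^*(0)=0$ is recorded, and this cancellation is precisely what singles out the sign conditions on $\div\nabla\varphi^*(\nabla V)$ as the natural assumptions under which the constant $M$ (resp.\ $m$) is a sub-/supersolution of $Q$.
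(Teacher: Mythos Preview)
Your proof is correct and follows essentially the same approach as the paper: both apply the tangency principle (Proposition~\ref{prop: tangency Q}) with the comparison function taken to be the constant $M$ (respectively $m$), observing that $Q$ evaluated on a constant reduces to $\div\nabla\varphi^*(\nabla V)$, whose sign is controlled by hypothesis. Your write-up is slightly more explicit in recording $\nabla\varphi^*(0)=0$, though in fact this is not strictly needed when using the expanded form~\eqref{eq: operator Q}, since $\nabla w'=0$ and $\nabla^2 w'=0$ already annihilate the first three terms.
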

\begin{proof}
\begin{enumerate}[(1)]
\item In Proposition \ref{prop: tangency Q}, we take $w'=M$, which is the maximum attained by $w$. We have  
\begin{equation*}
Qw'=\div(\nabla \varphi^*(\nabla V))\leq 0\leq Qw.
\end{equation*}
In addition, $w\leq M$, and $u=M$ is attained at some point in $\Omega$. The strong maximum principle is then followed straightforwardly from the tangency principle.

\item Similarly the minimum principle is proved by taking $w=m'$, which is the minimum attained by $w'$. 
\end{enumerate}
\end{proof}
\begin{remark}
\label{re: computation}
In the case of the relativistic heat equation \eqref{rel heat equation}, with
(for simplicity we put $c=1$)
\begin{equation}
\varphi_c^*(x)=\sqrt{1+|x|^2}-1,
\end{equation}
we have
\begin{equation*}
 \nabla \varphi_c^*(x)=\frac{x}{\sqrt{1+|x|^2}},\quad\text{and}\quad(\nabla^2\varphi^*(x))_{ij}=\frac{1}{\sqrt{1+|x|^2}}\left(\delta_{ij}-\frac{x_ix_j}{1+|x|^2}\right).
\end{equation*}
Hence, we obtain
\begin{align*}
Qw&=\nabla^2\varphi^*(\nabla w):\nabla^2 w+\nabla w\cdot\nabla\varphi^*(\nabla w)+\nabla w\cdot \nabla \varphi^*(\nabla V)+\div \nabla \varphi^*(\nabla V).
\\&=
\frac{\Delta w(1+|\nabla w|^2)-\nabla^2 w:(\nabla w\otimes \nabla w)}{(1+|z|^2)^{\frac{3}{2}}}+ 
\frac{|\nabla w|^2}{\sqrt{1+|\nabla w|^2}}+\frac{\nabla w\cdot\nabla V}{\sqrt{1+|\nabla V|^2}}+
\\&+\qquad\frac{\Delta V(1+|\nabla V|^2)-\nabla^2V:(\nabla V\otimes\nabla V)}{(1+|\nabla V|^2)^\frac{3}{2}}.
\\&=\sum_{i,j=1}^d a^{ij}(x,w,\nabla w)\partial^2_{ij} w+b(x,w,\nabla w),
\end{align*}
where
\begin{align*}
&a_{ij}(x,z,p)=(\nabla^2\varphi^*(p))_{ij}=\frac{1}{\sqrt{1+|p|^2}}\left(\delta_{ij}-\frac{p_ip_j}{1+|p|^2}\right),
\\& b(x,z,p)=\frac{|p|^2}{\sqrt{1+|p|^2}}+\frac{p\cdot\nabla V}{\sqrt{1+|\nabla V|^2}}+\frac{\Delta V(1+|\nabla V|^2)-\nabla^2V:(\nabla V\otimes\nabla V)}{(1+|\nabla V|^2)^\frac{3}{2}}
\end{align*}
It is interesting to note that in this case, the principle part of $Q$ is exactly the mean-curvature operator. This raises some interesting questions, which we comment further in Section \ref{sec: conclusion}.

In particular, in the one-dimensional setting $d=1$, we have
\begin{equation}
\div(\nabla \varphi^*(\nabla V))=\frac{V''(1+V'^2)-V'' V'^2}{(1+V'^2)^\frac{3}{2}}=\frac{V''}{(1+V'^2)^\frac{3}{2}}.
%=\frac{\alpha(\alpha-1)|x|^{\alpha-2}}{(1+V'(x)^2)^\frac{3}{2}}\leq 0,
\end{equation}
Then the condition on $V$ in the maximum/minimum principle in Proposition \ref{prop: max Q} is easily verified: $\div\nabla \varphi^*(\nabla V)\leq 0~ (\geq 0)$ if and only if $V$ is concave (convex, respectively).
\end{remark}

We proceed with a comparison principle for the operator $Q$ in the following proposition.
\begin{proposition}[Comparison principle for the operator $Q$]
\label{prop: comparison Q}
Suppose $w,w'\in C^2(\Omega)\cap C(\overline{\Omega})$ and satisfy $Qw\geq Qw'$ in $\Omega$. If $w\leq w'$ on $\partial \Omega$, then $w\leq w'$ in $\Omega$.
\end{proposition}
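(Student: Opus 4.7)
The plan is to proceed in direct analogy with the proof of Proposition~\ref{prop: tangency Q}: recognise that $Q$, when written in quasilinear form, falls within the scope of a classical comparison theorem for quasilinear elliptic operators (for instance, \cite[Theorem~10.1]{GT01}). Recall from the proof of Proposition~\ref{prop: tangency Q} that
\begin{equation*}
Qw = \sum_{i,j=1}^d a^{ij}(x,w,\nabla w)\partial^2_{ij}w + b(x,w,\nabla w),
\end{equation*}
with
\begin{equation*}
a^{ij}(x,z,p) = (\nabla^2\varphi^*(p))_{ij}, \qquad b(x,z,p) = p\cdot\nabla\varphi^*(p) + p\cdot\nabla\varphi^*(\nabla V) + \div\nabla\varphi^*(\nabla V),
\end{equation*}
and, crucially, both coefficients are independent of $z$.

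First I would verify the three structural hypotheses required by the cited comparison theorem: (i) strict ellipticity, which follows from (H2) since $\nabla^2\varphi^*(p)$ is positive definite; (ii) continuous differentiability of $a^{ij}$ and $b$ in $(z,p)$, which holds because $\varphi^*\in C^2$ and both coefficients are in fact independent of $z$; and (iii) monotonicity, i.e.\ $z\mapsto b(x,z,p)$ is non-increasing for each fixed $(x,p)$, which holds trivially because $\partial_z b \equiv 0$. With these three conditions in hand, the classical comparison theorem applies: $Qw \geq Qw'$ in $\Omega$ together with $w \leq w'$ on $\partial\Omega$ implies $w \leq w'$ in $\Omega$.

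There is essentially no technical obstacle once the correct framework has been identified; the genuine content lies in having performed the transformation $u\mapsto w=\log u$. To see why, note that in the classical comparison arguments one typically supposes for contradiction that $w-w'$ attains a positive interior maximum at some $x_0$; there $\nabla w(x_0)=\nabla w'(x_0)$, so the lower-order terms in $Qw-Qw'$ cancel exactly because $b$ is independent of $z$, leaving $a^{ij}(x_0,\nabla w(x_0))\partial^2_{ij}(w-w')(x_0)\leq 0$ by positive definiteness of $a$ and negative semi-definiteness of $\nabla^2(w-w')(x_0)$, contradicting $Qw\geq Qw'$ after a standard perturbation argument (e.g., replacing $w$ by $w+\varepsilon\psi$ with $Q$ strictly applied to $\psi$, then letting $\varepsilon\to 0$). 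The essential point, already exploited in Proposition~\ref{prop: tangency Q}, is that the logarithmic change of variables decouples the coefficients of $Q$ from the unknown itself; for the original operator $L$ the corresponding coefficients would depend on $u$ through the term $u\nabla\varphi^*(\nabla\log u)$, and the monotonicity hypothesis (iii) would be far less transparent. That is precisely the observation which makes this proposition essentially a direct citation of the standard quasilinear comparison theorem.
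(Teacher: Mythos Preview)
Your proposal is correct and follows essentially the same approach as the paper: both recognise that $Q$, written in quasilinear form with coefficients independent of $z$, falls within the scope of a classical comparison theorem for quasilinear elliptic operators, and both conclude by direct citation. The paper invokes \cite[Theorem~2.1.4]{PS07} whereas you cite \cite[Theorem~10.1]{GT01}, but these are equivalent standard results; your explicit verification of the hypotheses and sketch of the underlying contradiction argument simply spell out what the paper leaves implicit.
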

\begin{proof}
Similarly as in the proof of Proposition \ref{prop: tangency Q}, this proposition is obtained by applying the comparison for nonlinear elliptic operators in \cite[Theorem 2.1.4]{PS07}.
\end{proof}
%Note that $\nabla\varphi(\nabla\varphi^*(z))=z$, and
%\begin{equation}
%\varphi^*(z)=\sup_{x\in\R^{d}}\{x\cdot z-\varphi(x)\}.
%\end{equation}
%The optimal $x$ satisfies
%\begin{equation}
%z=\nabla \varphi(x),
%\end{equation}
%so that
%\begin{equation}
%x=(\nabla\varphi)^{-1}(z)=\nabla\varphi^*(z).
%\end{equation}
%Therefore
%\begin{equation}
%\varphi^*(z)=\nabla\varphi^*(z)\cdot z-\varphi(\nabla\varphi^*(z)).
%\end{equation}

We are now ready to provide the proofs of the main results for the original stationary problem.
\begin{proof}[\textbf{Proofs of Theorem \ref{theo: tangency L}, Theorem \ref{theo: strong max L} and Theorem \ref{theo: comparison L}}]
\ \\
\ \\
Since the exponential transformation $u=\exp(w)$ is positive and monotone, the statements of  Theorem \ref{theo: tangency L}, Theorem \ref{theo: strong max L} and Theorem \ref{theo: comparison L} respectively follow from those of Proposition \ref{prop: tangency Q}, Proposition \ref{prop: max Q} and Proposition \ref{prop: comparison Q}.
\end{proof}
We now discuss more on the transformation between $u$ and $w$.
\begin{remark}
\label{re: comment 1}
As one can recognise from the proofs in this section, the transformation $w=\log u$ has two advantages. Firstly, it makes the operator $Q$ depends only on $\nabla w$ and not on $w$ itself. This enables us to obtain maximum and comparison principles for the operator $Q$ by applying known results for quasilinear elliptic operators. Secondly, it is positive and monotone, therefore, we achieve results for $P$ straightforwardly from the corresponding ones for $Q$.

The transformation is clearly hinted by the form of Eq. \eqref{eq: gRHE eqn}. Tracing back from its derivation in Section \ref{sec: derivation}, it is the gradient flow structure of the Boltzmann entropy with respect to the Kantorovich optimal transport functional associated to the general relativistic cost that gives rise to that form. In other words, the transformation matches perfectly well with the structure of the equation.

The exponential transformation has been frequently used in the literature for the study of nonlinear partial differential equations (PDEs). However, note that the exponential transformation in this paper, is different from the well-known Cole-Hopf transformation. The latter is used to convert a nonlinear into a linear PDE. While we still get a nonlinear PDE in the former. If $\varphi$ is the classical cost function, $\varphi(x)=\frac{x^2}{2}$, the transformation in this paper is actually inverse of the Cole-Hopf one.
\end{remark}
\section{The evolutionary problem}
\label{sec: parabolic}
In this section, we prove the main results, Theorem \ref{theo: comparison L evol} and Theorem \ref{theo: weak max L evol} for the evolutionary problem \ref{eq: evolutionary prob}
\begin{equation*}
\partial_t u=Lu.
\end{equation*}
Using the transformation as in the stationary case, the above equation is transformed to
\begin{equation}
\partial_t w=Qw.
\end{equation}
The strategy now is analogous to the stationary case in the previous section. We prove comparison and maximum principles for the evolutionary problem associated to the operator $Q$ using known theories in the literature, and then obtain the corresponding results for the original problem for the operator $L$ using the positivity and monotonicity of the transformation.

We recall that the definition on the spacetime domain and functions are given in the Introduction. We first consider a comparison principle for the evolutionary problem associated to $Q$.
\begin{proposition}[Comparison principle for the evolutionary problem for $Q$]
\  \\
\label{prop: comparison Q evol}
Suppose that $w,w'\in C^{2,1}(\Omega_T)\cap C(\overline{\Omega_T})$ satisfying $\partial_t w-Q w\leq \partial_t w'_t-Qw'$ in $\Omega$. Then $\max\limits_{\overline{\Omega_T}}(w-w')=\max\limits_{\Gamma_T}(w-w')$. In particular, if $w\leq w'$ on $\Gamma_T$, then $w\leq w'$ in $\Omega_T$.
\end{proposition}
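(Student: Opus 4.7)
The plan is to transplant the strategy of Proposition~\ref{prop: comparison Q} from the elliptic to the parabolic setting. Since the operator $Q$ in \eqref{eq: operator Q} has coefficients depending only on $x$ and $\nabla w$ (and not on $w$ itself), and since the principal matrix $\nabla^2\varphi^*(p)$ is positive definite by (H2), the operator $\partial_t - Q$ falls into the class of strictly parabolic quasilinear operators for which a standard comparison principle is available, for instance in \cite{PS07}. The proof therefore reduces to checking that the structural hypotheses apply and running the classical maximum-point argument.

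Concretely, I would argue by contradiction. Suppose $\max_{\overline{\Omega_T}}(w-w') > \max_{\Gamma_T}(w-w')$, and introduce the auxiliary function $\tilde{w} := w - \varepsilon t$ for a small $\varepsilon > 0$, chosen so that the strict inequality $\max_{\overline{\Omega_T}}(\tilde{w}-w') > \max_{\Gamma_T}(\tilde{w}-w')$ is preserved. Because $Q$ involves only spatial derivatives of its argument and these are unaffected by the shift by $\varepsilon t$, we have $Q\tilde{w} = Qw$, whereas $\partial_t\tilde{w} = \partial_t w - \varepsilon$. Consequently,
\begin{equation*}
\partial_t \tilde{w} - Q\tilde{w} \;\leq\; (\partial_t w' - Qw') - \varepsilon \;<\; \partial_t w' - Qw' \quad \text{in } \Omega_T.
\end{equation*}
Let $(x_0, t_0)$ be a point of $\overline{\Omega_T} \setminus \Gamma_T$ at which $\tilde{w} - w'$ attains its maximum over $\overline{\Omega_T}$. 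The usual first-order conditions at such a point give
\begin{equation*}
\nabla\tilde{w}(x_0,t_0) = \nabla w'(x_0,t_0) =: p, \qquad \nabla^2\tilde{w}(x_0,t_0) \leq \nabla^2 w'(x_0,t_0), \qquad \partial_t\tilde{w}(x_0,t_0) \geq \partial_t w'(x_0,t_0).
\end{equation*}
Since the zeroth- and first-order coefficients of $Q$ depend only on $x$ and on $\nabla w$, they evaluate identically on $\tilde{w}$ and on $w'$ at $(x_0,t_0)$, so only the leading second-order term differs, and by positive-definiteness of $\nabla^2\varphi^*(p)$,
\begin{equation*}
Q\tilde{w}(x_0,t_0) - Qw'(x_0,t_0) \;=\; \nabla^2\varphi^*(p) : \bigl(\nabla^2\tilde{w}(x_0,t_0) - \nabla^2 w'(x_0,t_0)\bigr) \;\leq\; 0.
\end{equation*}
Combined with $\partial_t\tilde{w}(x_0,t_0) \geq \partial_t w'(x_0,t_0)$, this yields $\partial_t\tilde{w} - Q\tilde{w} \geq \partial_t w' - Qw'$ at $(x_0,t_0)$, contradicting the strict inequality displayed above. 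Sending $\varepsilon \downarrow 0$ gives the desired identity $\max_{\overline{\Omega_T}}(w-w') = \max_{\Gamma_T}(w-w')$, and the ``in particular'' clause follows at once.

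The main technical nuisance I anticipate is the bookkeeping at $t_0 = T$: if the maximum of $\tilde{w} - w'$ over $\overline{\Omega_T}\setminus\Gamma_T$ is attained at the terminal time, only a one-sided time derivative is available, but the inequality $\partial_t\tilde{w}(x_0,T) \geq \partial_t w'(x_0,T)$ still holds in that one-sided sense, which is precisely what the contradiction step requires. Aside from this minor point, the argument is the direct parabolic analogue of the elliptic one for $Q$, and can equivalently be obtained by invoking verbatim the comparison principle for quasilinear parabolic operators in \cite{PS07}, whose structural hypotheses are met by $Q$ under (H1)--(H2) together with the crucial fact that the coefficients of $Q$ do not depend on $w$.
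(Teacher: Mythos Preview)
Your argument is correct and complete; the $\varepsilon t$ perturbation together with the observation that at an interior maximum the gradients coincide (so all lower-order terms of $Q$ cancel automatically) is exactly what is needed, and your treatment of the one-sided time derivative at $t_0=T$ is the right caveat. One cosmetic point: once you have derived a contradiction for a single small $\varepsilon>0$, the proof by contradiction is already finished, so the ``sending $\varepsilon\downarrow 0$'' step is superfluous.

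The paper takes a different but equivalent route. Rather than working directly at a maximum point, it sets $z=w-w'$ and linearises the difference $Qw-Qw'$ via the mean value theorem, writing it as $\tilde Q z=\mathbf{a}(x,t)\nabla^2 z+\tilde b(x,t)\cdot\nabla z$ with frozen coefficients $\mathbf{a}(x,t)=\nabla^2\varphi^*(\nabla w)$ and $\tilde b$ coming from the increments of $\mathbf{a}$ and of the lower-order part. This reduces the inequality to $\partial_t z-\tilde Q z\le 0$ for a \emph{linear} strictly parabolic operator with no zeroth-order term, and the conclusion then follows by citing the linear weak maximum principle in \cite{Evans98}. Your direct maximum-point argument is more self-contained and avoids the mean value theorem bookkeeping; the paper's linearisation is more modular, since it reduces everything to a black-box linear result. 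Both approaches hinge on the same structural fact, namely that the coefficients of $Q$ are independent of $w$, and either one is a perfectly good proof of the proposition.
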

\begin{proof}
The proof follows the procedure as in \cite[Theorem 3.3]{MS15TR}, which is in turn mainly adapted from \cite[Theorem 10.1]{GT01}. We rearrange the inequality in the assumption to obtain
\begin{equation}
\label{eq: evol enequality}
\partial_t(w-w')-(Qw-Qw')\leq 0.
\end{equation}
Set $\overline{b}(p)=p\cdot \nabla\varphi^*(p)+p\cdot \nabla\varphi^*(\nabla V)$. We have
\begin{align*}
Qw-Qw'&=\mathbf{a}(\nabla w)\nabla^2 w+\tilde{b}(\nabla w)-\mathbf{a}(\nabla w')\nabla^2 w'-\overline{b}(\nabla w')
\\&=\mathbf{a}(\nabla w)(\nabla^2 w-\nabla^2 w')+(\mathbf{a}(\nabla w)-\mathbf{a}(\nabla w'))\nabla^2 w'+\tilde{b}(\nabla w)-\overline{b}(\nabla w').
\end{align*}
We set $z=w-w'$, and define the linear operator $\tilde{Q}z=\mathbf{a}(x,t)\nabla^2 z+\tilde{b}(x,t)\nabla z$, with
\begin{align*}
&\mathbf{a}(x,t)=\mathbf{a}(\nabla w),
\\&\tilde{b}(x,t)\nabla z=(\mathbf{a}(\nabla w)-\mathbf{a}(\nabla w'))\nabla^2 w'+\overline{b}(\nabla w)-\overline{b}(\nabla w').
\end{align*}
Note that the existence of the function $b$ is due to the mean value theorem. Then we obtain $Qw-Qw'=\tilde{Q}z$, and \eqref{eq: evol enequality} becomes
\begin{equation*}
\partial_t z-\tilde{Q}z\leq 0,
\end{equation*}
in $\Omega_T$. In addition, $z\leq 0$ on $\Gamma_T$. By the parabolic weak maximum principle \cite[Theorem 8, Section 7]{Evans98}, we have that $z\leq 0$ in $\Omega_T$, i.e., $w\leq w'$ in $\Omega_T$. This completes the proof.
\end{proof}
As a consequence of the comparison principle we obtain the weak maximum principle for evolutionary problem.
\begin{proposition}[Weak maximum principle for the evolutionary problem for $Q$]
\ \\
The following statements hold.
\label{prop: weak max Q evol}
\begin{enumerate}[(1)]
\item Suppose that $w\in C^{2,1}(\Omega_T)\cap C(\overline{\Omega_T})$ and $\div(\nabla\varphi^*(\nabla V))\geq 0$. If $\partial_t w-Qw \leq 0$, then $\max\limits_{\overline{\Omega_T}} w=\max\limits_{\Gamma_T}w$.
\item Similarly, suppose that $w'\in C^{2,1}(\Omega_T)\cap C(\overline{\Omega_T})$ and $\div(\nabla\varphi^*(\nabla V))\leq 0$. If $\partial_t w'-Qw'\geq 0$ , then $\min\limits_{\overline{\Omega_T}} w'=\min\limits_{\Gamma_T}w'$.
\end{enumerate}
\end{proposition}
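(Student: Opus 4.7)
The plan is to reduce the weak maximum/minimum principle for $Q$ to the comparison principle already established in Proposition \ref{prop: comparison Q evol}, by comparing $w$ (resp.\ $w'$) against a well-chosen constant. This is the standard route from a comparison principle to a weak maximum principle for linear parabolic operators, and the point is that the sign condition on $\div \nabla \varphi^*(\nabla V)$ is exactly what makes constants admissible as comparison functions once the drift-type contribution of $V$ is taken into account.

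For part (1), I would set $M := \max_{\Gamma_T} w$ and take the constant function $\tilde w \equiv M$, which obviously belongs to $C^{2,1}(\Omega_T) \cap C(\overline{\Omega_T})$. Because $\nabla \tilde w \equiv 0$ and $\partial_t \tilde w \equiv 0$, the expression \eqref{eq: Q} for $Q$ evaluated at $\tilde w$ collapses, after using $\nabla \varphi^*(0) = 0$ (a consequence of property (P1) in Lemma \ref{lem: Gibbs}), to
\begin{equation*}
Q \tilde w \;=\; \div \nabla \varphi^*(\nabla V),
\end{equation*}
so that $\partial_t \tilde w - Q \tilde w = -\div \nabla \varphi^*(\nabla V)$. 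Combining the sign hypothesis on $\div \nabla \varphi^*(\nabla V)$ with the assumption $\partial_t w - Q w \leq 0$ then yields the ordering $\partial_t w - Q w \leq \partial_t \tilde w - Q \tilde w$ throughout $\Omega_T$. Since $w \leq M = \tilde w$ on $\Gamma_T$ by construction, Proposition \ref{prop: comparison Q evol} delivers $w \leq M$ on $\overline{\Omega_T}$, and the reverse inequality is immediate because $\Gamma_T \subset \overline{\Omega_T}$; hence $\max_{\overline{\Omega_T}} w = \max_{\Gamma_T} w$.

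Part (2) is entirely symmetric: set $m := \min_{\Gamma_T} w'$, use the constant function $\tilde w \equiv m$ on the left-hand side of the comparison principle, and exploit the opposite sign assumption on $\div \nabla \varphi^*(\nabla V)$ to arrange $\partial_t \tilde w - Q \tilde w \leq \partial_t w' - Q w'$; Proposition \ref{prop: comparison Q evol} then gives $m \leq w'$ on $\overline{\Omega_T}$.

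No serious obstacle is anticipated: once the constant test function is identified, the proof reduces to a one-line algebraic check and a direct invocation of Proposition \ref{prop: comparison Q evol}. The only substantive ingredient is the odd-symmetry fact $\nabla \varphi^*(0) = 0$, which eliminates every term of $Q$ except the pure external-field contribution $\div \nabla \varphi^*(\nabla V)$ when evaluated on a constant; this also explains transparently why the sign condition on $\div \nabla \varphi^*(\nabla V)$ is unavoidable, as otherwise the external drift could carry $w$ past its boundary values and constants would cease to be admissible barriers.
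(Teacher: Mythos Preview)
Your proposal is correct and follows essentially the same route as the paper: reduce to the comparison principle of Proposition~\ref{prop: comparison Q evol} by comparing against a constant function, for which $Q$ collapses to $\div\nabla\varphi^*(\nabla V)$. The only cosmetic difference is that you take the constant $M=\max_{\Gamma_T}w$ (resp.\ $m$) and invoke the ``in particular'' clause, whereas the paper simply sets $w'=0$ (resp.\ $w=0$) and reads off $\max_{\overline{\Omega_T}}(w-0)=\max_{\Gamma_T}(w-0)$ from the first clause; since $Q$ of any constant is the same, the two choices are equivalent.
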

\begin{proof}
\begin{enumerate}[(1)]
\item This statement follows directly from Proposition \ref{prop: comparison Q evol} by taking $w'=0$.
\item Similarly, this statement follows directly from Proposition \ref{prop: comparison Q evol} by taking $w=0$.
\end{enumerate}
\end{proof}
Now we are ready to prove the main results for the original operator $L$.
\begin{proof}[\textbf{Proofs of Theorem \ref{theo: comparison L evol} and Theorem \ref{theo: weak max L evol}}]
\ \\
\ \\
Again, by the positivity and monotonicity of the exponential transformation $u=\exp(w)$, Theorem \ref{theo: comparison L evol} and Theorem \ref{theo: weak max L evol} respectively follow from Proposition \ref{prop: comparison Q evol} and Proposition \ref{prop: weak max Q evol}.
\end{proof}
\section{Conclusion and outlook}
\label{sec: conclusion}
In this paper, we derive a generalised relativistic heat equation with external force fields and study comparison and maximum principles for this equation. The latter has been carried out using an logarithmic transformation which hinges on the gradient flow structure of the Boltzmann entropy with respect to an optimal transport associated to the general relativistic cost function. The derivation opens some interesting problems. The first possibility is to derive Eq. \eqref{eq: gRHE eqn} use only optimal transport theory. It is not clear which energy functional should be used. The second open question is to rigorously prove that when $c$ goes to infinity, Eq.\eqref{rel heat equation} degenerates to the classical linear drift-diffusion equation. Research in this direction has recently been received lot of attention. For example, in \cite{Cas07, Cas15}, the author has proved that solutions of the relativistic heat equation without external force field indeed converge to that of the classical heat equation. We believe that the methods in these papers can be extended to include the external force fields. The third   open problem is to derive Eq. \eqref{eq: gRHE eqn}, and in general flux-limited diffusions, from stochastic models. Connections between partial differential equations and stochastic processes 
often provide insightful information about the geometrical structure and also offer new techniques for coarse-graining of the former, see e.g. \cite{DPZ13a, DPZ13b, DLR13, DLPS15} for more information in this direction. As mentioned in Remark \ref{re: computation}, the principle part of the operator $Q$ is exactly the well-known mean curvature operator. Derivation of the mean-curvature flow using stochastic approach has been studied extensively in the literature, for instance in \cite{ST03, Soner07}. It would be interesting to investigate whether one can derive Eq. \eqref{eq: FP eqn}, and in general flux-limited diffusions, using such approaches. We leave these open problems for further research in the future.
\bibliographystyle{abbrv}
\bibliography{bibGENERIC}

\begin{thebibliography}{10}

\bibitem{ADPZ11}
S.~Adams, N.~Dirr, M.~A. Peletier, and J.~Zimmer.
\newblock From a large-deviations principle to the {W}asserstein gradient flow:
  a new micro-macro passage.
\newblock {\em Comm. Math. Phys.}, 307:791--815, 2011.

\bibitem{AGS08}
L.~Ambrosio, N.~Gigli, and G.~Savar\'e.
\newblock {\em Gradient flows in metric spaces and in the space of probability
  measures}.
\newblock Lectures in Mathematics. ETH Z\"urich. Birkhauser, Basel, 2nd
  edition, 2008.

\bibitem{ACM05b}
F.~Andreu, V.~Caselles, and J.~M. Maz{\'o}n.
\newblock The {C}auchy problem for a strongly degenerate quasilinear equation.
\newblock {\em J. Eur. Math. Soc. (JEMS)}, 7(3):361--393, 2005.

\bibitem{ACM05a}
F.~Andreu, V.~Caselles, and J.~M. Maz{\'o}n.
\newblock A strongly degenerate quasilinear equation: the parabolic case.
\newblock {\em Arch. Ration. Mech. Anal.}, 176(3):415--453, 2005.

\bibitem{ACM06}
F.~Andreu, V.~Caselles, J.~M. Maz{\'o}n, and S.~Moll.
\newblock Finite propagation speed for limited flux diffusion equations.
\newblock {\em Arch. Ration. Mech. Anal.}, 182(2):269--297, 2006.

\bibitem{BBNS10}
N.~Bellomo, A.~Bellouquid, J.~Nieto, and J.~Soler.
\newblock Multiscale biological tissue models and flux-limited chemotaxis for
  multicellular growing systems.
\newblock {\em Mathematical Models and Methods in Applied Sciences},
  20(07):1179--1207, 2010.

\bibitem{BBNS12}
N.~Bellomo, A.~Bellouquid, J.~Nieto, and J.~Soler.
\newblock On the asymptotic theory from microscopic to macroscopic growing
  tissue models: An overview with perspectives.
\newblock {\em Mathematical Models and Methods in Applied Sciences},
  22(01):1130001, 2012.

\bibitem{BP13}
J.~Bertrand and M.~Puel.
\newblock The optimal mass transport problem for relativistic costs.
\newblock {\em Calculus of Variations and Partial Differential Equations},
  46(1-2):353--374, 2013.

\bibitem{Bre03}
Y.~Brenier.
\newblock Extended monge-kantorovich theory.
\newblock In {\em Optimal Transportation and Applications}, volume 1813 of {\em
  Lecture Notes in Mathematics}, pages 91--121. Springer Berlin Heidelberg,
  2003.

\bibitem{Cas07}
V.~Caselles.
\newblock Convergence of the `relativistic' heat equation to the heat equation
  as {$c\to\infty$}.
\newblock {\em Publ. Mat.}, 51(1):121--142, 2007.

\bibitem{Cas15}
V.~Caselles.
\newblock Convergence of flux limited porous media diffusion equations to its
  classical counterpart.
\newblock {\em Ann. Sc. Norm. Super. Pisa Cl.}, XIV(5), 2015.

\bibitem{DLPS15}
M.~H. Duong, A.~Lamacz, M.~A. Peletier, and U.~Sharma.
\newblock Variational approach to coarse-graining of generalized gradient fows.
  submitted,.
\newblock \url{http://arxiv.org/abs/1507.03207}, 2015.

\bibitem{DLR13}
M.~H. Duong, V.~Laschos, and M.~Renger.
\newblock Wasserstein gradient flows from large deviations of many-particle
  limits.
\newblock {\em ESAIM Control Optim. Calc. Var.}, 19(4):1166--1188, 2013.

\bibitem{DPZ13b}
M.~H. Duong, M.~A. Peletier, and J.~Zimmer.
\newblock G{ENERIC} formalism of a {V}lasov-{F}okker-{P}lanck equation and
  connection to large-deviation principles.
\newblock {\em Nonlinearity}, 26(11):2951--2971, 2013.

\bibitem{DPZ13a}
M.~H. Duong, M.~A. Peletier, and J.~Zimmer.
\newblock Conservative-dissipative approximation schemes for a generalized
  kramers equation.
\newblock {\em Mathematical Methods in the Applied Sciences},
  37(16):2517--2540, 2014.

\bibitem{Evans98}
L.~Evans.
\newblock {\em Partial Differential Equations}.
\newblock Graduate studies in mathematics. American Mathematical Society, 1998.

\bibitem{GT01}
D.~Gilbarg and N.~S. Trudinger.
\newblock {\em Elliptic partial differential equations of second order}.
\newblock Classics in Mathematics. Springer-Verlag, Berlin, 2001.
\newblock Reprint of the 1998 edition.

\bibitem{Han91}
W.~H{\"a}nsch.
\newblock {\em The drift diffusion equation and its applications in MOSFET
  modeling}.
\newblock Computational microelectronics. Springer-Verlag, 1991.

\bibitem{JKO98}
R.~Jordan, D.~Kinderlehrer, and F.~Otto.
\newblock The variational formulation of the fokker-planck equation.
\newblock {\em SIAM Journal on Mathematical Analysis}, 29(1):1--17, 1998.

\bibitem{CP09}
R.~J. McCann and M.~Puel.
\newblock Constructing a relativistic heat flow by transport time steps.
\newblock {\em Annales de l'Institut Henri Poincare (C) Non Linear Analysis},
  26(6):2539 -- 2580, 2009.

\bibitem{MS15TR}
E.~Miller and A.~Stern.
\newblock Maximum principles for the relativistic heat equation.
\newblock \url{http://arxiv.org/abs/1507.05030}, 2015.

\bibitem{PS07}
P.~Pucci and J.~Serrin.
\newblock {\em The maximum principle}.
\newblock Progress in Nonlinear Differential Equations and their Applications,
  73. Birkh\"auser Verlag, Basel, 2007.

\bibitem{Ros92}
P.~Rosenau.
\newblock Tempered diffusion: A transport process with propagating fronts and
  inertial delay.
\newblock {\em Phys. Rev. A}, 46:R7371--R7374, Dec 1992.

\bibitem{Soner07}
H.~M. Soner.
\newblock Chapter 6 - stochastic representations for nonlinear parabolic
  \{PDEs\}.
\newblock volume~3 of {\em Handbook of Differential Equations: Evolutionary
  Equations}, pages 477 -- 526. North-Holland, 2007.

\bibitem{ST03}
H.~M. Soner and N.~Touzi.
\newblock A stochastic representation for mean curvature type geometric flows.
\newblock {\em Ann. Probab.}, 31(3):1145--1165, 2003.

\bibitem{Vil03}
C.~Villani.
\newblock {\em Topics in optimal transportation}, volume~58 of {\em Graduate
  Studies in Mathematics}.
\newblock American Mathematical Society, Providence, RI, 2003.

\bibitem{Zakari97}
M.~Zakari.
\newblock A continued-fraction expansion for flux limiters.
\newblock {\em Physica A: Statistical Mechanics and its Applications},
  240(3–4):676 -- 684, 1997.

\bibitem{ZJ98}
M.~Zakari and D.~Jou.
\newblock A generalized einstein relation for flux-limited diffusion.
\newblock {\em Physica A: Statistical Mechanics and its Applications},
  253(1–4):205 -- 210, 1998.

\end{thebibliography}
\end{document}